\providecommand{\U}[1]{\protect\rule{.1in}{.1in}}
\newtheorem{theorem}{Theorem}
{}
\newtheorem{notation}{Notation}
\newtheorem{proposition}{Proposition}
\newtheorem{remark}{Remark}
\newtheorem{summary}{Summary}
\newenvironment{proof}[1][Proof]{\textbf{#1.} }{\ \rule{0.5em}{0.5em}}
\begin{document}

\title{On the spectrum of the differential operators of even order with periodic
matrix coefficients}
\author{O. A. Veliev\\{\small Department of Mechanical Engineering, Dogus University, Istanbul,
Turkey.}\\{\small Email: oveliev@dogus.edu.tr}}
\date{}
\maketitle

\begin{abstract}
In this paper, we consider the band functions, Bloch functions and spectrum of
the self-adjoint differential operator $L$ with periodic matrix coefficients.
Conditions are found for the coefficients under which the number of gaps in
the spectrum of the operator $L$ is finite.

Key Words: Band functions, Bloch functions, Spectrum.

AMS Mathematics Subject Classification: 34L05, 34L20.

\end{abstract}

In this paper, we investigate the band functions, Bloch functions and spectrum
of the differential operator $L,$ generated in the space $L_{2}^{m}%
(\mathbb{R})$ of vector-valued functions by formally self-adjoint differential
expression
\begin{equation}
(-i)^{2\nu}y^{(2\nu)}(x)+%
{\textstyle\sum\limits_{k=2}^{2\nu-2}}
P_{k}(x)y^{(2\nu-k)}(x), \tag{1}%
\end{equation}
where $\nu>1$ and $P_{k}\left(  x\right)  ,$ for each $k=2,3,...2\nu-2,$ is
the $m\times m$ matrix with the summable entries $p_{k,i,j}$ satisfying
$p_{k,i,j}\left(  x+1\right)  =p_{k,i,j}\left(  x\right)  $ for all $i$
$=1,2,...m$ and $j$ $=1,2,...m.$

To explain the results of this paper, let us introduce some notations. It is
well-known that (see [1, 2, 4]) the spectrum $\sigma(L)$ of the operator $L$
is the union of the spectra $\sigma(L_{t})$ of the operators $L_{t},$ for
$t\in(-\pi,\pi],$ generated in $L_{2}^{m}[0,1]$ by (1) and the quasiperiodic
conditions
\begin{equation}
U_{p}(y):=y^{(p)}\left(  1\right)  -e^{it}y^{(p)}\left(  0\right)  =0,\text{
}p=0,1,...,(2\nu-1). \tag{2}%
\end{equation}
For $t\in(-\pi,\pi]$ the spectra $\sigma(L_{t})$ of the operators $L_{t}$
consist of the eigenvalues
\begin{equation}
\lambda_{1}(t)\leq\lambda_{2}(t)\leq\cdot\cdot\cdot\tag{3}%
\end{equation}
called the Bloch eigenvalues of $L$. The eigenfunctions $\Psi_{n,t}$
corresponding to the Bloch eigenvalues $\lambda_{n}(t)$ are the Bloch
functions of $L$.

In [10] the continuity of the band function $\lambda_{n}:t\rightarrow
\lambda_{n}(t)$ and Bloch function of the operator $L$ was investigated. In
Section 2, we improve these results as follows. We only assume that the
entries of the coefficients of (1) are summable function, while in [10] it was
assumed that they are bounded functions. In [10] the choice of the continuous
Bloch functions was made in a non-constructive way. Here we constructively
define the continuous Bloch functions. We prove that for each $f\in L_{2}%
^{m}[0,1]$ the function $P_{t}f(x)$ converges to $P_{a}f$ $(x)$ uniformly with
respect to $x\in\lbrack0,1]$ as $t\rightarrow a$, while in [10] this
convergence was done in the $L_{2}$ norm, where $P_{t}$ is the projection of
$L_{t}$ corresponding to the eigenvalue $\lambda_{n}(t).$ Moreover, the
methods used in Section 2 and [10] are completely different. Therefore,
Section 2 can be considered as a continuation and completion of the paper [10].

In Section 3, we consider the spectrum of the operator $L.$ Since $\sigma(L)$
is the union of $\sigma(L_{t})$ for $t\in(-\pi,\pi],$ \ the spectrum of $L$
consists of the sets
\begin{equation}
I_{n}=\left\{  \lambda_{n}(t):t\in(-\pi,\pi]\right\}  ,\tag{4}%
\end{equation}
for $n=1,2,....$ The set $I_{n}$ is called the $n$th band of the spectrum. The
band $I_{n}$ tends to infinity as $n\rightarrow\infty.$ The spaces between the
bands $I_{k}$ and $I_{k+1}$, for $k=1,2,...,$ are called the gaps in the
spectrum of $L.$ In Section 3, we prove that most of the positive real axis is
overlapped by $m$ bands of the spectrum and consider the gaps (see Theorems
5). Then we find a condition on the eigenvalues of the matrix
\begin{equation}
C=\int_{0}^{1}P_{2}\left(  x\right)  dx\tag{5}%
\end{equation}
for which the number of the gaps in the spectrum is finite (see Theorem 6).
Note that in [6], we proved Theorem 6 under the assumption that the matrix $C$
has three simple eigenvalues $\mu_{j_{1}},$ $\mu_{j_{2}}$ and $\mu_{j_{3}}$
satisfying (30). In this paper, we prove Theorem 6 without any conditions on
the multiplicity of these eigenvalues. The case $\nu=1$ was investigated in
[9]. Parts of the proofs of Theorems 5 and 6, similar to the proofs of the
case $\nu=1,$ are omitted and references to [9] are given.

\section{On the band functions and Bloch functions}

\ \ In this section, first, we study the continuity of the band functions and
Bloch functions of $L$ with respect to the quasimomentum by using the
following well-known statements (see for example [5] Chap. 3) formulated here
as summary.

\begin{summary}
The eigenvalues of $L_{t}$ are the roots of the characteristic determinant
\begin{equation}
\Delta(\lambda,t)=\det(Y_{j}^{(p-1)}(1,\lambda)-e^{it}Y_{j}^{(p-1)}%
(0,\lambda))_{j,p=1}^{2\nu}= \tag{6}%
\end{equation}%
\[
e^{i2\nu mt}+f_{1}(\lambda)e^{i(2\nu m-1)t}+f_{2}(\lambda)e^{i(2\nu
m-2)t}+...+f_{2\nu m-1}(\lambda)e^{it}+1
\]
which is a polynomial of $e^{it}$\ with entire coefficients $f_{1}%
(\lambda),f_{2}(\lambda),...$, where

$Y_{1}(x,\lambda),Y_{2}(x,\lambda),\ldots,Y_{2\nu}(x,\lambda)$ are the
solutions of the matrix equation
\[
(-i)^{2\nu}Y^{(2\nu)}+P_{2}Y^{(2\nu-2)}+P_{3}Y^{(2\nu-3)}+...+P_{2\nu
}Y=\lambda Y,
\]
satisfying $Y_{k}^{(j)}(0,\lambda)=O$ for $j\neq k-1$ and $Y_{k}%
^{(k-1)}(0,\lambda)=I$. Here, $O$ and $I$ are $m\times m$ zero and identity
matrices, respectively. The Green's function of $L_{t}-\lambda I$ is defined
by formula%
\begin{equation}
G(x,\xi,\lambda,t)=g(x,\xi,\lambda)-\frac{1}{\Delta(\lambda,t)}\sum
\limits_{j,p=1}^{2\nu}Y_{j}(x,\lambda)V_{jp}(x,\lambda)U_{p}(g),\tag{7}%
\end{equation}
where $g$ does not depend on $t$ and $V_{jp}$ is the transpose of that $m$th
order matrix consisting of the cofactor of the element $U_{p}(Y_{j})$ in the
determinant $\det(U_{p}(Y_{j}))_{j,p=1}^{2\nu}.$ Hence, the entries of the
matrices $V_{jp}(x,\lambda)$ and $U_{p}(g)$ either do not depend on $t$ or
have the forms $u^{(p)}(1,\lambda)-e^{it}u^{(p)}(0,\lambda)$ and
$h(1,\xi,\lambda)-e^{it}h(0,\xi,\lambda)$ respectively, where the functions
$u$ and $h$ do not depend on $t.$
\end{summary}

Now, using this summary, we prove that for each $n$\ the function $\lambda
_{n}$ defined in (3) is continuous at each point $a\in(-\pi,\pi].$ For this we
introduce the following notations.

\begin{notation}
Let $\Lambda_{1}(a)<\Lambda_{2}(a)<\cdot\cdot\cdot$ be the distinct
eigenvalues of $L_{a}$ with the multiplicities $k_{1},k_{2},...,$
respectively. For each $n$ there exists $p$ such that $n\leq k_{1}+k_{2}%
+\cdot\cdot\cdot+k_{p}.$ These notations with the notation (3) imply that
$\lambda_{s_{j-1}+1}(a)=\lambda_{s_{j-1}+2}(a)=\cdot\cdot\cdot=\lambda_{s_{j}%
}(a)=\Lambda_{j}(a),$ where $s_{0}=0,$ $s_{j}=k_{1}+k_{2}+\cdot\cdot
\cdot+k_{j}$ for $j=1,2,...,p$ and $n\leq s_{p}.$ Since $L$ is below-bounded
operator, there exists $b\in\mathbb{R}$ such that $\lambda_{1}(t)>b$ for all
$t\in(-\pi,\pi].$
\end{notation}

Now, we are ready to prove the following theorem.

\begin{theorem}
$(a)$ For every $r>0$ satisfying the inequality
\begin{equation}
r<\tfrac{1}{2}\min_{j=1,2,...p}\left(  \Lambda_{j+1}(a)-\Lambda_{j}(a)\right)
\tag{8}%
\end{equation}
there exists $\delta>0$ such that the operator $L_{t}$, for $t\in
(a-\delta,a+\delta),$ has \ $k_{j}$ eigenvalues in the interval $\left(
\Lambda_{j}(a)-r,\Lambda_{j}(a)+r\right)  $, where $j=1,2,...,p$ and
$a\in(-\pi,\pi].$

$(b)$ The eigenvalues of $L_{t}$ for $t\in(a-\delta,a+\delta)$ lying in
$\left(  \Lambda_{j}(a)-r,\Lambda_{j}(a)+r\right)  $ are $\lambda_{s_{j-1}%
+1}(t),\lambda_{s_{j-1}+2}(t),...,\lambda_{s_{j}}(t),$ where $s_{j}$ is
defined in Notation 1.
\end{theorem}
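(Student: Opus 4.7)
The plan is to base everything on the characteristic determinant $\Delta(\lambda,t)$ from the Summary, which is a polynomial in $e^{it}$ with entire coefficients in $\lambda$ and hence is jointly continuous in $(\lambda,t)$ and uniformly continuous on compact sets. The eigenvalues of $L_{t}$ are exactly the zeros of $\Delta(\cdot,t)$; since $L_{t}$ is self-adjoint, I would note that the order of each zero of $\Delta(\cdot,t)$ equals the algebraic (and geometric) multiplicity of the corresponding eigenvalue, so that $\Lambda_{j}(a)$ has $k_{j}$ as its order of vanishing for $\Delta(\cdot,a)$.

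For part $(a)$, fix $r$ satisfying (8) and let $\gamma_{j}$ be the circle $\{\lambda\in\mathbb{C}:|\lambda-\Lambda_{j}(a)|=r\}$ for $j=1,\dots,p$. The inequality (8) forces each $\gamma_{j}$ to avoid all eigenvalues of $L_{a}$, hence $c:=\min_{j}\min_{\lambda\in\gamma_{j}}|\Delta(\lambda,a)|>0$. By uniform continuity of $\Delta$ on the compact set $\bigcup_{j}\gamma_{j}\times[a-1,a+1]$, I can pick $\delta>0$ so that
\[
|\Delta(\lambda,t)-\Delta(\lambda,a)|<c\leq|\Delta(\lambda,a)|
\]
for every $\lambda\in\bigcup_{j}\gamma_{j}$ and every $t\in(a-\delta,a+\delta)$. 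Rouch\'{e}'s theorem then gives that $\Delta(\cdot,t)$ has exactly $k_{j}$ zeros (counted with multiplicity) inside $\gamma_{j}$. Since the zeros are real eigenvalues of the self-adjoint $L_{t}$, they all lie in $(\Lambda_{j}(a)-r,\Lambda_{j}(a)+r)$, proving $(a)$.

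For part $(b)$ I would apply the same Rouch\'{e} argument to a large rectangular contour $\Gamma$ enclosing the real interval $[b-1,\Lambda_{p}(a)+r]$, avoiding the discrete spectrum of $L_{a}$. The total order of vanishing of $\Delta(\cdot,a)$ inside $\Gamma$ is exactly $s_{p}=k_{1}+k_{2}+\cdots+k_{p}$, so shrinking $\delta$ if necessary, the same number of eigenvalues of $L_{t}$ lies inside $\Gamma$ for $|t-a|<\delta$. Combined with $\lambda_{1}(t)>b$, these $s_{p}$ eigenvalues exhaust $\sigma(L_{t})\cap(-\infty,\Lambda_{p}(a)+r)$. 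Since part $(a)$ already locates $k_{1}+\cdots+k_{p}=s_{p}$ of them inside the disjoint intervals $(\Lambda_{j}(a)-r,\Lambda_{j}(a)+r)$, nothing else can appear below $\Lambda_{p}(a)+r$; in particular $L_{t}$ has no eigenvalue in $(-\infty,\Lambda_{1}(a)-r]$, and between consecutive intervals. Monotone labeling (3) then forces the $k_{j}$ eigenvalues in the $j$-th interval to be precisely $\lambda_{s_{j-1}+1}(t),\dots,\lambda_{s_{j}}(t)$, giving $(b)$.

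The only non-routine point I expect is the identification of the operator-theoretic multiplicity $k_{j}$ with the order of vanishing of $\Delta(\cdot,a)$ at $\Lambda_{j}(a)$; without this equality, Rouch\'{e} would yield the correct count of zeros but not of eigenvalues. This is standard for self-adjoint regular boundary value problems, but I would state it explicitly, perhaps by recalling that the Green's function formula (7) has a pole of order equal to the eigenvalue multiplicity at each $\Lambda_{j}(a)$, so the order of the zero of $\Delta(\cdot,a)$ at $\Lambda_{j}(a)$ matches $k_{j}$. Everything else reduces to continuity of a polynomial in $e^{it}$ and a bookkeeping argument based on the monotone enumeration (3).
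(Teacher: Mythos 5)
Your argument is correct in outline but follows a genuinely different route from the paper. The paper never invokes Rouch\'{e}'s theorem: after establishing the uniform lower bound $|\Delta(\lambda,t)|>c/2$ on the circle $D_{j}(a)$, it works with the Green's function formula (7), derives the Lipschitz estimate $|G(x,\xi,\lambda,t)-G(x,\xi,\lambda,a)|\leq M|t-a|$, and concludes that the resolvent and hence the Riesz projection $P_{t}=\int_{D_{j}(a)}(L_{t}-\lambda I)^{-1}d\lambda$ depend continuously on $t$; since nearby projections of finite rank have equal rank, $L_{t}$ has exactly $k_{j}$ eigenvalues inside $D_{j}(a)$. This sidesteps entirely the issue you correctly flag as the crux of your approach, namely that the order of vanishing of $\Delta(\cdot,a)$ at $\Lambda_{j}(a)$ equals $k_{j}$. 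That identification is indeed standard for regular self-adjoint problems (it is the statement that the order of the zero of the characteristic determinant equals the algebraic multiplicity, which for a self-adjoint $L_{a}$ coincides with the geometric multiplicity $k_{j}$), but your proposed justification is not quite right as stated: for a self-adjoint operator the Green's function has a \emph{simple} pole at each eigenvalue, with residue a projection of rank $k_{j}$; it is the determinant $\Delta$ sitting in the denominator of (7), compared against the rank of that residue (e.g.\ via the trace of the resolvent and the argument principle), that yields the multiplicity count --- the pole order itself is $1$, not $k_{j}$. If you cite the standard multiplicity statement (Naimark) rather than the pole-order heuristic, your Rouch\'{e} argument is a clean and arguably more elementary alternative, since it avoids the projection continuity machinery; the paper's route has the advantage that the continuity of $P_{t}$ is needed anyway for Theorem 2$(b)$, so it does double duty. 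Your treatment of part $(b)$ via a large contour is more elaborate than necessary --- the paper simply reruns the lower-bound argument for $\Delta$ on the closed gap intervals $[b,\Lambda_{1}(a)-r]$ and $[\Lambda_{j}(a)+r,\Lambda_{j+1}(a)-r]$ to exclude eigenvalues there and then appeals to the monotone labeling (3) --- but it reaches the same conclusion correctly.
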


\begin{proof}
$(a)$ By (8), the circle $D_{j}(a)=\left\{  z\in\mathbb{C}:\left\vert
z-\Lambda_{j}(a)\right\vert =r\right\}  $ belongs to the resolvent set of the
operator $L_{a}.$ It means that $\Delta(\lambda,a)\neq0$ for each $\lambda\in
D_{j}(a).$ Since $\Delta(\lambda,a)$ is a continuous function on the compact
$D_{j}(a),$ there exists $c>0$ such that $\left\vert \Delta(\lambda
,a)\right\vert >c$ for all $\lambda\in D_{j}(a).$ Moreover, by (6),
$\Delta(\lambda,t)$ is a polynomial of $e^{it}$ with entire coefficients.
Therefore, there exists $\delta>0$ such that
\begin{equation}
\left\vert \Delta(\lambda,t)\right\vert >c/2\tag{9}%
\end{equation}
for all $t\in(a-\delta,a+\delta)$ and $\lambda\in D_{j}(a).$ It implies that
$D_{j}(a)$ belongs to the resolvent set of $L_{t}$ for all $t\in
(a-\delta,a+\delta).$ On the other hand, it is well-known that
\begin{equation}
\left(  L_{t}-\lambda I\right)  ^{-1}f(x)=\int_{0}^{1}G(x,\xi,\lambda
,t)f(\xi)d\xi,\tag{10}%
\end{equation}
where $G(x,\xi,\lambda,t)$ is the Green's function of $L_{t}$ defined in (7).
Moreover, it easily follows from Summary 1 and (9) that there exists $M$ such
that
\begin{equation}
\left\vert G(x,\xi,\lambda,t)-G(x,\xi,\lambda,a)\right\vert \leq M\left\vert
t-a\right\vert \tag{11}%
\end{equation}
for all $x\in\lbrack0,1],$ $\xi\in\lbrack0,1]$, $\lambda\in D_{j}(a)$ and
$t\in(a-\delta,a+\delta).$ Therefore, using (10) and Summary 1, one can easily
verify that $\left(  L_{t}-\lambda I\right)  ^{-1}$ for $\lambda\in D_{j}(a)$
and the projection
\begin{equation}
P_{t}=%
{\textstyle\int\nolimits_{D_{j}(a)}}
\left(  L_{t}-\lambda I\right)  ^{-1}d\lambda\tag{12}%
\end{equation}
continuously depend on $t\in(a-\delta,a+\delta).$ This implies that the
operator $L_{t}$ for each $t\in(a-\delta,a+\delta)$ has \ $k_{j}$ eigenvalues
inside $D_{j}(a)$ and, therefore, in the interval $\left(  \Lambda
_{j}(a)-r,\Lambda_{j}(a)-r\right)  ,$ since $L_{a}$ has \ $k_{j}$ eigenvalues
(counting multiplicity) inside $D_{j}(a)$.

$(b)$ Since $L_{a}$ has no eigenvalues in the intervals $[b,\Lambda_{1}(a)-r]$
and $\left[  \Lambda_{j}(a)+r,\Lambda_{j+1}(a)-r\right]  $ for $j=1,2,...,p$,
arguing as above we obtain that $L_{t}$ for $t\in(a-\delta,a+\delta)$ also has
no eigenvalues in these closed intervals. Therefore, the eigenvalues of
$L_{t}$ for $t\in(a-\delta,a+\delta)$ lying in $\left(  \Lambda_{j}%
(a)-r,\Lambda_{j}(a)+r\right)  $ are $\lambda_{s_{j-1}+1}(t),\lambda
_{s_{j-1}+2}(t),...,\lambda_{s_{j}}(t)$ for $j=1,2,...,p.$
\end{proof}

Now, using these statements, we prove the main results of this section.

\begin{theorem}
$(a)$ For each $n$\ the function $\lambda_{n}$ defined in (3) is continuous at
$a\in(-\pi,\pi].$

$(b)$ For each $f\in L_{2}^{m}[0,1]$ we have $\left\Vert P_{t}f-P_{a}%
f\right\Vert _{\infty}\rightarrow0$ as $t\rightarrow a$, where
\[
\left\Vert f\right\Vert _{\infty}=\sup\nolimits_{x\in\lbrack0,1]}\left\vert
f(x)\right\vert .
\]

\end{theorem}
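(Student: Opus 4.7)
For part $(a)$, the plan is to read continuity directly off Theorem 1. Given $\varepsilon>0$, pick some $r<\varepsilon$ that also satisfies (8), and let $j$ be the unique index with $s_{j-1}<n\le s_{j}$, so that by Notation 1 one has $\lambda_{n}(a)=\Lambda_{j}(a)$. By Theorem 1(b), there exists $\delta>0$ such that for every $t\in(a-\delta,a+\delta)$ all of $\lambda_{s_{j-1}+1}(t),\ldots,\lambda_{s_{j}}(t)$ lie inside $(\Lambda_{j}(a)-r,\Lambda_{j}(a)+r)$. In particular $|\lambda_{n}(t)-\lambda_{n}(a)|<r<\varepsilon$, giving continuity at $a$.

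For part $(b)$, the plan is to express $P_{t}f$ as a double integral involving the Green's function and exploit the uniform Lipschitz bound (11). Fix $r$ satisfying (8) and let $j$ be the index with $s_{j-1}<n\le s_{j}$. Inserting (10) into (12) yields, for all $t\in(a-\delta,a+\delta)$,
\[
P_{t}f(x)=\int_{D_{j}(a)}\!\!\int_{0}^{1}G(x,\xi,\lambda,t)f(\xi)\,d\xi\,d\lambda,
\]
where Fubini is justified by the boundedness of $G$ supplied by Summary 1 together with (9). Subtracting the analogous formula at $t=a$, the Lipschitz bound (11) and the Cauchy--Schwarz inequality give, uniformly in $x\in[0,1]$,
\[
|P_{t}f(x)-P_{a}f(x)|\le \int_{D_{j}(a)}\!\!\int_{0}^{1}M|t-a|\,|f(\xi)|\,d\xi\,|d\lambda| \le 2\pi r M|t-a|\,\|f\|_{L_{2}^{m}[0,1]}.
\]
Taking the supremum in $x$ and letting $t\to a$ yields $\|P_{t}f-P_{a}f\|_{\infty}\to 0$, which is the desired claim.

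The step I expect to be the main obstacle is not the estimate above but the verification of (11) with the required uniformity in $(x,\xi,\lambda)$ under the hypothesis that the coefficients $P_{k}$ are merely summable; indeed, one needs bounds on the solutions $Y_{k}(x,\lambda)$, the cofactor matrices $V_{jp}(x,\lambda)$, and the quantities $U_{p}(g)$ that are uniform for $\lambda$ on the compact set $D_{j}(a)$. This is a standard consequence of Gronwall-type estimates for linear ODE systems with $L^{1}$ coefficients; once those are in hand, the fact that $\Delta(\lambda,t)$ and the quantities $u^{(p)}(\cdot,\lambda)-e^{it}u^{(p)}(\cdot,\lambda)$ and $h(\cdot,\xi,\lambda)-e^{it}h(\cdot,\xi,\lambda)$ appearing in (7) are polynomial, hence analytic, in $e^{it}$, combined with $|\Delta(\lambda,t)|>c/2$ from (9), makes (11) routine. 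With (11) established and Theorem 1 in hand, both conclusions follow as sketched.
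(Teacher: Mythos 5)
Your proposal is correct and follows essentially the same route as the paper: part $(b)$ is the identical Green's-function estimate combining (10)--(12) with the Lipschitz bound (11), and part $(a)$ rests on Theorem 1$(b)$ just as the paper's does, except that you read off $|\lambda_{n}(t)-\Lambda_{j}(a)|<r$ directly instead of running the paper's limit-point argument via the continuity of $\Delta(\lambda,t)$. Your closing remark correctly identifies that the only real work is the uniform bound (11), which the paper likewise takes from Summary 1 and (9) inside the proof of Theorem 1.
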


\begin{proof}
$(a)$ Consider any sequence $\left\{  \left(  \lambda_{n}(t_{k}),t_{k}\right)
:k\in\mathbb{N}\right\}  $ such that $t_{k}\in(a-\delta,a+\delta)$ for all
$k\in\mathbb{N}$ and $t_{k}\rightarrow a$ as $k\rightarrow\infty,$ where
$\delta$ is defined in Theorem 1. Let $\left(  \lambda,a\right)  $ be any
limit point of the sequence $\left\{  \left(  \lambda_{n}(t_{k}),t_{k}\right)
:k\in\mathbb{N}\right\}  .$ Since $\Delta$ is a continuous function with
respect to the pair $(\lambda,t)$ and $\Delta\left(  \lambda_{n}(t_{k}%
),t_{k}\right)  =0$ for all $k,$ we have $\Delta(\lambda,a)=0.$ This means
that $\lambda$ is an eigenvalue of $L_{a}$ lying in $\left(  \Lambda
_{j}(a)-r,\Lambda_{j}(a)+r\right)  $. Hence, by Theorem 1$(b),$ we have
$\lambda=\lambda_{s_{j-1}+1}(a)=$ $\lambda_{s_{j-1}+2}(a)=...=\lambda_{s_{j}%
}(a),$ where $n\in\lbrack s_{j-1}+1,s_{j}].$ Thus, $\lambda_{n}(t_{k}%
)\rightarrow\lambda_{n}(a)$ as $k\rightarrow\infty$ for any sequence $\left\{
t_{k}:k\in\mathbb{N}\right\}  $ converging to $a$ and $\lambda_{n}$ is
continuous at $a$.

$(b)$ Using (10)-(12) we obtain the following estimation
\[
\left\vert P_{t}f(x)-P_{a}f(x)\right\vert =\left\vert \int\nolimits_{D_{j}%
(a)}\int_{0}^{1}\left(  G(x,\xi,\lambda,t)-G(x,\xi,\lambda,a)\right)
f(\xi)d\xi d\lambda\right\vert \leq
\]%
\[
2\pi rM\left\vert t-a\right\vert \int_{0}^{1}\left\vert f(\xi)\right\vert d\xi
\]
for all $x\in\lbrack0,1].$ This estimation implies the proof of $(b).$
\end{proof}

Note that in [9], the continuity of the band function for the case $\nu=1$ was
proved by using the perturbation theory from [3]. In [10], we investigated the
differential operator $T$, generated in the space $L_{2}^{m}(\mathbb{R}^{d})$
by formally self-adjoint differential expression of order $2\nu$ with matrix
coefficients, whose entries are periodic with respect to the lattice $\Omega$,
where $d\geq1$. Note that the band functions $\lambda_{1}(t)\leq\lambda
_{2}(t)\leq\cdot\cdot\cdot\ $\ and Bloch functions $\Psi_{1,t},\Psi_{2,t},...$
of $T$ are the eigenvalues and normalized eigenfunctions of the operator
$T_{t}$ generated in $L_{2}^{m}(F)$ by by the same differential expression and
the quasiperiodic conditions%
\[
u(x+\omega)=e^{i\left\langle t,\omega\right\rangle }u(x),\ \forall\omega
\in\Omega,
\]
where $t\in F^{\star}$, $\left\langle \cdot,\cdot\right\rangle $ is the inner
product in $\mathbb{R}^{d}$, $F$ and $F^{\star}$ are the fundamental domains
of the lattice $\Omega$ and dual lattice $\Gamma$, respectively. It was
proved, in [10], that the Bloch eigenvalues and corresponding projections of
the differential operator $T_{t}$ depend continuously on $t\in F^{\ast}$.
Moreover, if $\lambda_{n}(a)$\ is a simple eigenvalue, then the eigenvalues
$\lambda_{n}(t)$ are simple in some neighborhood of $a$ and the corresponding
eigenfunctions $\Psi_{n,t}$ can be chosen so that
\[
\left\Vert \Psi_{n,t}-\Psi_{n,a}\right\Vert \rightarrow0
\]
as $t\rightarrow a$. In [10], the Bloch function $\Psi_{n,t}$ was chosen so
that
\begin{equation}
\arg(\Psi_{n,t},\Psi_{n,a})=0 \tag{13}%
\end{equation}
which is not a constructive choice.

Now, instead of (13), we constructively define the normalized eigenfunctions
$\Psi_{n,t}$ that depend continuously on $t.$ First of all, let us note the
following obvious statement. If $\lambda_{n}(t)$ is a simple eigenvalue, then
the set of all normalized eigenfunctions corresponding to $\lambda_{n}(t)$ is
$\left\{  e^{i\alpha}\Psi_{n,t}:\alpha\in\lbrack0,2\pi)\right\}  ,$ where
$\Psi_{n,t}$ is a fixed normalized eigenfunction. If the eigenvalue
$\lambda_{n}(a)$ is simple, then there exists a neighborhood $U(a)$ of the
point $a\in F^{\star}$ such that for $t\in U(a)$ the eigenvalue $\lambda
_{n}(t)$ is also simple and the equality
\begin{equation}
\int_{D}(T_{t}-\lambda I)^{-1}e^{i\left\langle a,x\right\rangle }e_{k}%
d\lambda=(e^{i\left\langle a,x\right\rangle }e_{k},e^{i\alpha}\Psi
_{n,t})e^{i\alpha}\Psi_{n,t}=(e^{i\left\langle a,x\right\rangle }e_{k}%
,\Psi_{n,t})\Psi_{n,t} \tag{14}%
\end{equation}
is true for any choice of the normalized eigenfunction $\Psi_{n,t},$ where $D$
is a closed curve enclosing only the eigenvalue $\lambda_{n}(t),$ and
$e_{1},e_{2},...,e_{m}$ is the standard basis of $\mathbb{C}^{m}.$ Since the
projection operator onto the subspace corresponding to the eigenvalue
$\lambda_{n}(t)$ \ depends continuously on $t\in U(a),$ and the\ norm is a
continuous function, it follows from (14) that $\left\vert (\Psi
_{n,t},e^{i\left\langle a,x\right\rangle }e_{k})\right\vert $ is also a
continuous function with respect to $t$ in $U(a)$ for any normalized
eigenfunction $\Psi_{n,t}.$ This and the inequality
\begin{equation}
\left\vert \left\vert (\Psi_{n,t},e^{i\left\langle t,x\right\rangle }%
e_{k})\right\vert -\left\vert (\Psi_{n,t},e^{i\left\langle a,x\right\rangle
}e_{k})\right\vert \right\vert \leq\left\Vert e^{i\left\langle
t,x\right\rangle }-e^{i\left\langle a,x\right\rangle }\right\Vert \tag{15}%
\end{equation}
give the following obvious statement.

\begin{proposition}
If $\lambda_{n}(a)$ is a simple eigenvalue, then the function $\left\vert
(\Psi_{n,t},e^{i\left\langle t,x\right\rangle }e_{k})\right\vert $ does not
depend on choice of the normalized eigenfunction $\Psi_{n,t}$ and is
continuous in some neighborhood $U(a)$ of $a$, where $U(a)\subset F^{\ast}$
and any set $E$ satisfying the conditions:

$(a)$ $\left\{  \gamma+t:t\in E,\text{ }\gamma\in\Gamma\right\}
=\mathbb{R}^{d}$ and

$(b)$\ if $t\in E,$ then $\gamma+t\notin E$ for any $\gamma\in\Gamma
\backslash\left\{  0\right\}  ,$

can be used as the fundamental domain $F^{\star}$ of $\Gamma.$
\end{proposition}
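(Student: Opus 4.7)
The plan is to verify the two assertions of the proposition in turn, leaning on the identity (14) and the inequality (15), which already do most of the work.

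For the first claim (independence of the choice of normalized eigenfunction), I would first note that because $\lambda_n(a)$ is simple and the spectral projection onto the $\lambda_n(t)$-eigenspace depends continuously on $t\in U(a)$, the eigenvalue $\lambda_n(t)$ stays simple throughout some neighborhood of $a$. Hence its normalized eigenspace is the circle $\{e^{i\alpha}\Psi_{n,t}:\alpha\in[0,2\pi)\}$, and replacing $\Psi_{n,t}$ by $e^{i\alpha}\Psi_{n,t}$ multiplies the inner product $(\Psi_{n,t},\cdot)$ by $e^{-i\alpha}$, leaving its modulus invariant.

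For the continuity claim I would proceed in two stages. First, take $L_2^m(F)$-norms on both sides of the identity (14). Since $\Psi_{n,t}$ is of unit norm, the right-hand side has norm $|(e^{i\langle a,x\rangle}e_k,\Psi_{n,t})|$, while the left-hand side is the norm of the spectral projection of $T_t$ onto the $\lambda_n(t)$-eigenspace applied to the \emph{fixed} vector $e^{i\langle a,x\rangle}e_k$. The continuity of this projection in $t$, proved in [10] and analogous to Theorem 2(b) above, therefore gives the continuity of $t\mapsto|(e^{i\langle a,x\rangle}e_k,\Psi_{n,t})|$ on $U(a)$. In the second stage I would invoke the inequality (15) together with the obvious continuity of $t\mapsto e^{i\langle t,x\rangle}$ in $L_2^m(F)$-norm to transfer continuity from the ``$a$-version'' to the desired ``$t$-version'' $|(\Psi_{n,t},e^{i\langle t,x\rangle}e_k)|$.

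The closing clause about admissible fundamental domains is essentially a matter of definition: conditions (a) and (b) say precisely that the $\Gamma$-translates of $E$ tile $\mathbb{R}^d$ without overlap, which is the standard characterization of $E$ being a fundamental domain of $\Gamma$. Thus any such $E$ may play the role of $F^\star$, the freedom being used only to ensure that the neighborhood $U(a)$ of $a$ lies entirely inside the chosen fundamental domain. I do not expect any substantial obstacle here; the author himself flags the statement as ``obvious'' immediately after (15). The one point requiring care is to import the continuity of the spectral projection in $t$ from the multidimensional setting of [10], rather than relying directly on the one-dimensional version packaged in Theorem 2(b) of the present paper.
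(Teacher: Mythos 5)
Your argument is correct and matches the paper's own reasoning exactly: the phase-invariance of the modulus gives independence of the choice, taking norms in (14) together with the continuity of the spectral projection gives continuity of $|(\Psi_{n,t},e^{i\langle a,x\rangle}e_k)|$, and (15) transfers this to $|(\Psi_{n,t},e^{i\langle t,x\rangle}e_k)|$. The paper presents precisely this chain in the paragraph preceding the proposition, so nothing further is needed.
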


Since $\left\{  e^{i\left\langle \gamma+t,x\right\rangle }e_{k}:\gamma
\in\Gamma,\text{ }k=1,2,...,m\right\}  $ is an orthonormal basis of $L_{2}%
^{m}(F)$ there exist $\gamma\in\Gamma,$ $k\in\left\{  1,2,...,m\right\}  $ and
$\varepsilon>0$ such that%
\begin{equation}
\left\vert (\Psi_{n,a},e^{i\left\langle \gamma+a,x\right\rangle }%
e_{k})\right\vert >\varepsilon. \tag{16}%
\end{equation}
For example if $\left\vert (\Psi_{n,a},e^{i\left\langle \gamma
+a,x\right\rangle }e_{k})\right\vert =\max_{\beta\in\Gamma}\left\vert
(\Psi_{n,a},e^{i\left\langle \beta+a,x\right\rangle }e_{k})\right\vert ,$ then
(16) holds. Note that if $F^{\ast}$ is a fundamental domain of $\Gamma,$ then
$\gamma+F^{\ast}$ for any $\gamma\in\Gamma$ and even $b+F^{\ast}$ for any
$b\in\mathbb{R}^{d}$ is a fundamental domain of the lattice $\Gamma.$
Therefore, without loss of generality and for the simplicity of the notation,
we will use $a$ instead of $\gamma+a$ and assume that $a$ is an interior point
of $F^{\ast}.$ Therefore, by Proposition 1 and (16) there exists a
neighborhood $U(a)$ of $a$ such that
\begin{equation}
\left\vert (\Psi_{n,t},e^{i\left\langle t,x\right\rangle }e_{k})\right\vert
>\varepsilon\tag{17}%
\end{equation}
for all $t\in U(a)$ and the normalized eigenfunction $\Psi_{n,t}$ can be
chosen so that
\begin{equation}
\arg(\Psi_{n,t},e^{i\left\langle t,x\right\rangle }e_{k})=0. \tag{18}%
\end{equation}
Then $(\Psi_{n,t},e^{i(t,x)}e_{k})=\left\vert (\Psi_{n,t},e^{i(t,x)}%
e_{k})\right\vert ,$ and hence by Proposition 1, $(\Psi_{n,t},e^{i\left\langle
t,x\right\rangle }e_{k})$ depends continuously on $t$ in some neighborhood of
$a.$ From this, taking into account (15) and (17), it follows that%
\[
\frac{(\Psi_{n,t},e^{i\left\langle a,x\right\rangle }e_{k})}{(\Psi
_{n,a},e^{i\left\langle a,x\right\rangle }e_{k})}=:\alpha(t)\rightarrow1
\]
as $t\rightarrow a.$ Therefore, using the continuity of the right side of (14)
and then (17) we obtain
\[
\left\Vert (e^{i\left\langle a,x\right\rangle }e_{k},\Psi_{n,t})\Psi
_{n,t}-(e^{i\left\langle a,x\right\rangle }e_{k},\Psi_{n,a})\Psi
_{n,a}\right\Vert \rightarrow0
\]
and $\left\Vert \alpha(t))\Psi_{n,t}-\Psi_{n,a}\right\Vert \rightarrow0$ as
$t\rightarrow a.$ Thus, we have
\begin{equation}
\left\Vert \Psi_{n,t}-\Psi_{n,a}\right\Vert \leq\left\Vert (1-\alpha
(t))\Psi_{n,t}\right\Vert +\left\Vert \alpha(t))\Psi_{n,t}-\Psi_{n,a}%
\right\Vert \rightarrow0 \tag{19}%
\end{equation}
as $t\rightarrow a.$ In other words, the following statement is proved.

\begin{proposition}
If $\lambda_{n}(a)$ is a simple eigenvalue and (17) holds, then the normalized
eigenfunction $\Psi_{n,t}$ satisfying (18) depends continuously on $t$ in
$U(a).$
\end{proposition}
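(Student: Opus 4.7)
The proposition essentially packages the discussion that immediately precedes it, so my plan is to organize that discussion into a clean chain of three implications: (i) from Proposition 1 and the phase choice (18), derive continuity of the complex scalar $(\Psi_{n,t},e^{i\langle t,x\rangle}e_k)$ in a neighborhood of $a$; (ii) compare with $(\Psi_{n,t},e^{i\langle a,x\rangle}e_k)$ and define the ratio $\alpha(t)$; and (iii) combine the projection identity (14) with the lower bound (17) to upgrade scalar continuity into norm continuity of $\Psi_{n,t}$.

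For step (i), Proposition 1 gives continuity of $|(\Psi_{n,t},e^{i\langle t,x\rangle}e_k)|$ on $U(a)$, and the normalization (18) turns this modulus into the scalar itself, so the inner product $(\Psi_{n,t},e^{i\langle t,x\rangle}e_k)$ depends continuously on $t$. For step (ii), the Cauchy--Schwarz-type bound (15) shows that replacing $e^{i\langle t,x\rangle}$ by $e^{i\langle a,x\rangle}$ changes the inner product by at most $\|e^{i\langle t,x\rangle}-e^{i\langle a,x\rangle}\|$, which tends to $0$; hence $(\Psi_{n,t},e^{i\langle a,x\rangle}e_k)$ is also continuous at $a$. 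Since $(\Psi_{n,a},e^{i\langle a,x\rangle}e_k)$ is bounded below by $\varepsilon$ thanks to (17), the ratio $\alpha(t):=(\Psi_{n,t},e^{i\langle a,x\rangle}e_k)/(\Psi_{n,a},e^{i\langle a,x\rangle}e_k)$ is well defined in $U(a)$ and satisfies $\alpha(t)\to 1$.

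Step (iii) is the technical core. Using (14) at $t$ and at $a$ and the fact, established in the proof of continuity of the spectral projection on $U(a)$, that the left-hand side of (14) depends continuously on $t$, I obtain
\[
\bigl\|(e^{i\langle a,x\rangle}e_k,\Psi_{n,t})\Psi_{n,t}-(e^{i\langle a,x\rangle}e_k,\Psi_{n,a})\Psi_{n,a}\bigr\|\to 0.
\]
Dividing by the nonzero scalar $(e^{i\langle a,x\rangle}e_k,\Psi_{n,a})$ rewrites this as $\|\alpha(t)\Psi_{n,t}-\Psi_{n,a}\|\to 0$. A final triangle inequality
\[
\|\Psi_{n,t}-\Psi_{n,a}\|\leq|1-\alpha(t)|\,\|\Psi_{n,t}\|+\|\alpha(t)\Psi_{n,t}-\Psi_{n,a}\|,
\]
combined with $\|\Psi_{n,t}\|=1$ and $\alpha(t)\to 1$, gives the desired convergence, as already displayed in (19).

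The main obstacle is phase control: without (18) we would only have convergence modulo a unit scalar, exactly the difficulty that (13) papered over non-constructively in [10]. The nonvanishing hypothesis (17) is what makes (18) a meaningful constraint and what allows $\alpha(t)$ to be defined and bounded; all three steps rely on it, so the careful verification that (17) persists on a whole neighborhood of $a$ (done earlier via Proposition 1) is the part I would write out most carefully.
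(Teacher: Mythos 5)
Your proof is correct and follows the paper's own argument step for step: continuity of the scalar $(\Psi_{n,t},e^{i\langle t,x\rangle}e_{k})$ via (18) and Proposition 1, the bound (15) and the lower bound (17) to define $\alpha(t)$ and show $\alpha(t)\to1$, continuity of the projection identity (14) to obtain $\|\alpha(t)\Psi_{n,t}-\Psi_{n,a}\|\to0$, and the triangle inequality (19). This is essentially identical to the paper's proof (the proposition is explicitly the summary of the preceding discussion), so there is nothing further to add.
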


\begin{remark}
Note that the constructive choice (18) is also used in [7, 8]. Namely, in [8]
for the case when the right side of (1) is equal to $-\Delta+q$ and $m=1$ (for
the Schr\"{o}dinger operator) in the neighborhood of the sphere $\left\{
t\in\mathbb{R}^{d}:\left\vert t\right\vert =\rho\right\}  ,$ where $\rho$ is a
large number, I constructed a set $B$ such that if $t\in B,$ then there exists
a unique eigenvalue $\lambda_{n(t)}(t)$ that is simple and close to
$\left\vert t\right\vert ^{2}$ and the corresponding normalized eigenfunction
$\Psi_{n,t}$ satisfies the asymptotic formula
\begin{equation}
\left\vert (\Psi_{n,t},e^{i\left\langle t,x\right\rangle })\right\vert
^{2}=1+O(\rho^{-\delta})>\tfrac{1}{2}\tag{20}%
\end{equation}
for some $\delta>0.$ Moreover, the normalized eigenfunction was chosen so that
(18) holds (see [8], p. 55). In [8] the choice (18) was made in order to write
(20) in the elegant form $\Psi_{n,t}=e^{i\left\langle t,x\right\rangle
}+O(\rho^{-\delta})$. However, in Proposition 2 we show that the choice (18)
ensures the continuity of $\Psi_{n,t}.$ Note that Proposition 2 is also new
for the Schr\"{o}dinger operator. However, Proposition 1 for the
Schr\"{o}dinger operator is obvious, since it follows directly from the
continuity of the function $e^{i\left\langle t,x\right\rangle },$ the
projection operator, and the norm. Proposition 1 and (20) were used in [8] to
prove that $n(t)=n(a)$ for all $t\in U(a)$ (see (5.11) of [8]), where
$U(a)\subset B$ and the condition $(b)$ of Proposition 1 holds (see Lemma
5.1$(b)$ of [8]), i.e., $U(a)\subset\left(  B\cap F^{\star}\right)  $ for some
fundamental domain $F^{\ast}.$
\end{remark}

Now let us return to the study of $L.$

\begin{theorem}
If $\lambda_{n}(a)$ is a simple eigenvalue, then there exists $\beta>0$ such
that the eigenvalues $\lambda_{n}(t)$ for $\left\vert t-a\right\vert <\beta$
are also simple eigenvalues and the normalized eigenfunctions $\Psi_{n,t}$ of
$L_{t}$ satisfying (17) and (18) for $\left\langle t,x\right\rangle =tx$
converges to $\Psi_{n,a}(x)$ uniformly with respect to $x\in\lbrack0,1]$ as
$t\rightarrow a$.
\end{theorem}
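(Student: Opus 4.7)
The plan is to combine Theorem 1 (for simpleness and the number $\beta$), the rank-one structure of the spectral projection $P_t$ given by (12), and Theorem 2(b) (for the sup-norm continuity of the projection) to upgrade the $L_2$ continuity of $\Psi_{n,t}$ furnished by Proposition 2 into uniform convergence in $x\in[0,1]$. First, because $\lambda_n(a)$ is simple, pick $r<\tfrac{1}{2}\min_{j}\!\bigl(\Lambda_{j+1}(a)-\Lambda_j(a)\bigr)$ and apply Theorem 1: it produces $\delta>0$ such that for $|t-a|<\delta$ the operator $L_t$ has exactly one eigenvalue inside $D_j(a)$, namely $\lambda_n(t)$, so $\lambda_n(t)$ is simple on that neighborhood. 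Since $L_t$ is self-adjoint, the projection $P_t$ in (12) is the rank-one orthogonal projection onto $\operatorname{span}\{\Psi_{n,t}\}$, and $P_t f=(f,\Psi_{n,t})\Psi_{n,t}$ for every $f\in L_2^m[0,1]$.

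Next, exactly as in the discussion producing (16), I would choose $k\in\{1,\dots,m\}$ and (after the harmless relabeling $a\mapsto a+2\pi n_0$, $n_0\in\mathbb{Z}$) a number $\varepsilon>0$ such that $|c_a|>\varepsilon$, where $c_t:=(e^{iax}e_k,\Psi_{n,t})$. The argument of Proposition 2 transcribes without change to $L$ in dimension one and yields $\|\Psi_{n,t}-\Psi_{n,a}\|_{L_2}\to 0$; by Cauchy--Schwarz this gives $c_t\to c_a$, so there is $\beta\in(0,\delta]$ with $|c_t|>\varepsilon/2$ whenever $|t-a|<\beta$.

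Finally, taking $f=e^{iax}e_k$ in Theorem 2(b) produces $\|c_t\Psi_{n,t}-c_a\Psi_{n,a}\|_\infty\to 0$, and the identity
\[
c_t\bigl(\Psi_{n,t}-\Psi_{n,a}\bigr)=\bigl(c_t\Psi_{n,t}-c_a\Psi_{n,a}\bigr)+(c_a-c_t)\Psi_{n,a}
\]
isolates $\Psi_{n,t}-\Psi_{n,a}$: both right-hand terms tend to $0$ in sup norm---the first by Theorem 2(b), the second because $c_t\to c_a$ while $\Psi_{n,a}$ is bounded on $[0,1]$ (a solution of the ODE with summable coefficients is absolutely continuous, hence bounded on a compact interval). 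Division by $|c_t|>\varepsilon/2$ then delivers $\|\Psi_{n,t}-\Psi_{n,a}\|_\infty\to 0$. The main technical point I foresee is confirming that the framework of Propositions 1--2, originally written for the $d$-dimensional operator $T$, carries over verbatim to $L$ with $\langle t,x\rangle=tx$: the orthonormal basis $\{e^{i(2\pi n+t)x}e_k:n\in\mathbb{Z},\,k=1,\dots,m\}$ of $L_2^m[0,1]$ secures some $(n_0,k)$ realizing (16); the fundamental-domain shift in Proposition 1 is simply translation by $2\pi\mathbb{Z}$; and the bound (15) is immediate. Once those bookkeeping items are checked, Theorem 3 is essentially a short corollary of Theorem 2(b) and Proposition 2.
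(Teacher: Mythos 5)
Your proof is correct and follows essentially the same route as the paper's: sup-norm continuity of the rank-one projection (Theorem 2(b) together with formula (14) applied to $f=e^{iax}e_{k}$), the phase normalization (18) forcing $c_{t}\rightarrow c_{a}$, and an algebraic identity isolating $\Psi_{n,t}-\Psi_{n,a}$, which is exactly the paper's ``repeat the proof of (19) with $\left\Vert \cdot\right\Vert _{\infty}$'' step. The only (harmless) cosmetic differences are that you obtain simplicity of $\lambda_{n}(t)$ from Theorem 1 rather than from $\frac{d\Delta(\lambda,a)}{d\lambda}\neq0$, and you divide by $c_{t}$ using boundedness of the fixed function $\Psi_{n,a}$, whereas the paper divides by $c_{a}$ and uses the uniform bound $\left\vert \Psi_{n,t}(x)\right\vert \leq M$.
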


\begin{proof}
If $\lambda_{n}(a)$ is a simple eigenvalue, then $\frac{d\Delta(\lambda
,a)}{d\lambda}\neq0$ for $\lambda=\lambda_{n}(a).$ Then by Summary 1, Theorem
2$(b)$ and (14) there exists $\beta>0$ such that $\lambda_{n}(t)$ is also a
simple eigenvalue for $\left\vert t-a\right\vert <\beta$ and
\begin{equation}
\left\Vert (e^{iax}e_{k},\Psi_{n,t})\Psi_{n,t}-(e^{iax}e_{k},\Psi_{n,a}%
)\Psi_{n,a}\right\Vert _{\infty}\rightarrow0 \tag{21}%
\end{equation}
as $t\rightarrow a.$ Moreover, from (21) and (17) it follows that there exist
$\varepsilon>0$ and $M$ such that $\left\vert \Psi_{n,t}(x)\right\vert \leq M$
for all $\left\vert t-a\right\vert <\varepsilon$ and $x\in\lbrack0,1].$
Therefore, replacing $L_{2}$ norm $\left\Vert \cdot\right\Vert $ everywhere by
the norm $\left\Vert \cdot\right\Vert _{\infty}$ and repeating the proof of
(19) we obtain the proof of the theorem.
\end{proof}

\section{On the spectrum of $L$}

\ \ In this section, we study the spectrum of $L$. For this we consider the
operator $L_{t}(\varepsilon,C)$ generated by the differential expression
\[
L_{t}(\varepsilon,C)y=(-i)^{2\nu}y^{(2\nu)}+Cy^{(2\nu-2)}+\varepsilon\left(
(P_{2}-C)y^{(2\nu-2)}+%
{\textstyle\sum\limits_{l=3}^{2\nu}}
P_{l}(x)y^{(2\nu-l)}\right)
\]
and boundary conditions (2), where $\varepsilon\in\lbrack0,1],$ and $C$ is
defined in (5). We consider the operator $L_{t}(\varepsilon,C)$ as
perturbation of $L_{t}(C)$ by $L_{t}(\varepsilon,C)-L_{t}(C),$ where
$L_{t}(C)$ is the operator generated by the expression
\begin{equation}
(-i)^{2\nu}y^{(2\nu)}(x)+Cy^{(2\nu-2)}(x) \tag{22}%
\end{equation}
and boundary condition (2). Therefore, first of all, let us analyze the
eigenvalues and eigenfunction of the operator $L_{t}(C)$. We assume that $C$
is the Hermitian matrix. Then $L_{t}(C)$ is the self-adjoint operator, since
the expression (22) and boundary conditions (2) are self-adjoint. The distinct
eigenvalues of $C$ are denoted by $\mu_{1}<\mu_{2}<...<\mu_{p}$. If the
multiplicity of $\mu_{j}$ is $m_{j},$ then $m_{1}+m_{2}+...+m_{p}=m$. Let
$u_{j,1},$ $u_{j,2},...,u_{j,m_{j}}$ be the normalized eigenvectors of the
matrix $C$ corresponding to the eigenvalue $\mu_{j}.$ The functions
$\Phi_{k,j,s,t}(x)=u_{j,s}e^{i\left(  2\pi k+t\right)  x}$ for
$s=1,2,...,m_{j}$ are the eigenfunctions of $L_{t}(C)$ corresponding to the
eigenvalue $\ $%
\begin{equation}
\mu_{k,j}(t)=\left(  2\pi k+t\right)  ^{2\nu}+\mu_{j}\left(  2\pi k+t\right)
^{2\nu-2}, \tag{23}%
\end{equation}
since%
\begin{equation}
L_{t}(C)\Phi_{k,j,s,t}(x)=\mu_{k,j}(t)\Phi_{k,j,s,t}(x). \tag{24}%
\end{equation}

Now we consider the large eigenvalues of $L_{t}(\varepsilon,C)$. In the
forthcoming inequalities we denote by $c_{1},$ $c_{2},...$ the positive
constants that do not depend on\textit{ }$t\in(-\pi,\pi]$\textit{ }and\textit{
}$\varepsilon\in\lbrack0,1]$.

\begin{theorem}
\textit{ There exists a positive number }$N$ such that \textit{the eigenvalues
of }$L_{t}(\varepsilon,C)$\textit{ lying in }$\left(  \mu_{N,1}(t)-\varepsilon
_{N},\infty\right)  $ \textit{lie in }$\varepsilon_{k}$\textit{ neighborhood
}$U_{\varepsilon_{k}}(\mu_{k,j}(t)):=(\mu_{k,j}(t)-\varepsilon_{k},\mu
_{k,j}(t)+\varepsilon_{k})$ \textit{of }$\mu_{k,j}(t)$\textit{ for
}$\left\vert k\right\vert \geq N$ and $j=1,2,...,p,$\textit{ where
}$\varepsilon_{k}=c_{1}\left(  \mid k^{-1}\ln|k|\mid+q_{k}\right)  \left(
2\pi k\right)  ^{2\nu-2}$ and\textit{ }%
\[
q_{k}=\max\left\{  \left\vert \int\nolimits_{[0,1]}p_{2,s,r}\left(  x\right)
e^{-2\pi inx}dx\right\vert :s,r=1,2,...,m;\text{ }n=\pm2k,\pm(2k+1)\right\}
.
\]
\textit{ Moreover, for each }$\left\vert k\right\vert \geq N$ and
$j=1,2,...,p$\textit{, there exists an eigenvalue of }$L_{t}(\varepsilon
,C)$\textit{ lying in }$U_{\varepsilon_{k}}(\mu_{k,j}(t))$\textit{.}
\end{theorem}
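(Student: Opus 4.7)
The plan is to treat $L_{t}(\varepsilon,C)$ as a perturbation of $L_{t}(C)$, whose spectrum is completely explicit by (23)--(24), and to argue via an iterated binding formula in the orthonormal basis $\{\Phi_{k,j,s,t}\}$ of $L_{2}^{m}[0,1]$ obtained from (24) together with the orthonormal eigenbasis $\{u_{j,s}\}$ of the Hermitian matrix $C$. Given an eigenpair $(\lambda,\Psi)$ of $L_{t}(\varepsilon,C)$ with $\|\Psi\|=1$, pair the eigenvalue equation with $\Phi_{k,j,s,t}$ in $L_{2}^{m}[0,1]$. Since $L_{t}(C)$ is self-adjoint, satisfies (24), and shares the boundary condition (2) with $L_{t}(\varepsilon,C)$, integration by parts yields the binding formula
\[
(\lambda-\mu_{k,j}(t))(\Psi,\Phi_{k,j,s,t})=\varepsilon\Bigl((P_{2}-C)\Psi^{(2\nu-2)}+\sum_{l=3}^{2\nu}P_{l}\Psi^{(2\nu-l)},\Phi_{k,j,s,t}\Bigr).
\]

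Next, I would expand $\Psi=\sum_{n,i,r}(\Psi,\Phi_{n,i,r,t})\Phi_{n,i,r,t}$ and use $\Phi_{n,i,r,t}^{(2\nu-l)}=(i(2\pi n+t))^{2\nu-l}\Phi_{n,i,r,t}$, so the right-hand side becomes a convergent series whose $(n,i,r)$-term is a product of $(i(2\pi n+t))^{2\nu-l}$, of $(\Psi,\Phi_{n,i,r,t})$, and of a Fourier coefficient of an entry of $P_{l}$ (respectively of $P_{2}-C$) at frequency $k-n$. For the dominant $l=2$ piece I would split the sum into ``resonant'' indices $n$ for which $|\mu_{k,j}(t)-\mu_{n,i}(t)|$ is small, and the rest. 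The resonant indices are precisely those near $n=-k$; they produce exactly the Fourier frequencies $k-n\in\{\pm 2k,\,\pm(2k+1)\}$ that appear in $q_{k}$. Iterating the binding formula once and dividing the non-resonant part by $\lambda-\mu_{n,i}(t)$, the remaining contribution is bounded by $(2\pi k)^{2\nu-2}$ times $\sum_{n}\frac{1}{|k-n|\,|k+n|}=O(k^{-1}\ln|k|)$, and the lower-order terms with $l\geq 3$ carry smaller derivative weights $(2\pi n+t)^{2\nu-l}$ and are absorbed into the same bound. Consequently, if $\lambda$ lay outside every $U_{\varepsilon_{k}}(\mu_{k,j}(t))$ in the stated range, the binding formula would force $|(\Psi,\Phi_{k,j,s,t})|$ to decay too fast for $\|\Psi\|=1$ to hold, a contradiction that proves the first assertion.

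For the ``moreover'' assertion I would fix $N$ large enough that the circles $\partial U_{\varepsilon_{k}}(\mu_{k,j}(t))$ are pairwise disjoint for $|k|\geq N$, $j=1,\dots,p$, and that the above analysis gives $|\Delta(\lambda,t)|$ bounded below on each such circle, uniformly in $\varepsilon\in[0,1]$. Then the spectral projection
\[
\frac{1}{2\pi i}\oint_{\partial U_{\varepsilon_{k}}(\mu_{k,j}(t))}(L_{t}(\varepsilon,C)-\lambda I)^{-1}\,d\lambda
\]
depends continuously on $\varepsilon$ by a Green's-function estimate analogous to (11); its rank is therefore constant in $\varepsilon\in[0,1]$ and equals $m_{j}$ (the rank at $\varepsilon=0$, read off from (24)). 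Hence $L_{t}(\varepsilon,C)$ has exactly $m_{j}\geq 1$ eigenvalues in $U_{\varepsilon_{k}}(\mu_{k,j}(t))$, giving existence.

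The main obstacle is the bookkeeping in the second step: isolating precisely the Fourier frequencies $\pm 2k,\,\pm(2k+1)$ in the resonant contribution, using the zero-mean of $P_{2}-C$ to discard the $n=k$ term, and exploiting the factorization
\[
\mu_{k,j}(t)-\mu_{n,i}(t)=\bigl[(2\pi k+t)^{2\nu}-(2\pi n+t)^{2\nu}\bigr]+\bigl[\mu_{j}(2\pi k+t)^{2\nu-2}-\mu_{i}(2\pi n+t)^{2\nu-2}\bigr]
\]
to pull $(k-n)$ and $(k+n)$ out of the non-resonant denominators so that the resulting sums telescope to $k^{-1}\ln|k|$. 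Since the paper reduces this directly to the $\nu=1$ case of [9] by factoring $(2\pi k)^{2\nu-2}$ out of the derivative weights, I would follow that reduction for the remaining technicalities.
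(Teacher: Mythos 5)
Your core strategy coincides with the paper's: the binding formula you derive is exactly the paper's (25), and the estimate you sketch for its right-hand side (splitting into resonant frequencies $\pm 2k,\pm(2k+1)$ and a non-resonant tail summing to $O(k^{-1}\ln|k|)$ times $(2\pi k)^{2\nu-2}$) is precisely the inequality (26) that the paper simply imports from [6]. So most of your second paragraph is a reconstruction of a cited lemma rather than a divergence.

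There is, however, one genuine soft spot in how you close the first assertion. From the binding formula, if $\lambda$ lies outside every $U_{\varepsilon_{n}}(\mu_{n,i}(t))$, you get $|(\Psi,\Phi_{n,i,r,t})|<c_{2}/c_{1}$ for each index individually; but a uniform bound by a constant (even a small one) on infinitely many Fourier coefficients does not contradict $\|\Psi\|=1$. To make "decays too fast for normalization" rigorous you would need square-summability of these bounds with total mass strictly below $1$, and the non-resonant bounds of order $n^{-1}$ are not obviously good enough for that. The paper avoids this entirely by invoking Lemma 4 of [6], which asserts the much stronger fact that \emph{some single} inner product $|(\Psi_{k,j,s,t},\Phi)|$ exceeds an absolute constant $c_{3}$ (this is inequality (27)); combined with (25)--(26) that immediately pins $\lambda$ to within $\varepsilon_{k}$ of $\mu_{k,j}(t)$. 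That concentration statement is the real nontrivial input, and your sketch glosses over it. For the "moreover" part you take a genuinely different route: a Riesz-projection/rank-continuity argument in $\varepsilon$, versus the paper's use of the forward direction of (27). Your route is workable and gives the sharper count of exactly $m_{j}$ eigenvalues, but it needs the discs $U_{\varepsilon_{k}}(\mu_{k,j}(t))$ to be separated, which fails at the exceptional quasimomenta where $\mu_{k,j}(t)=\mu_{-k,i}(t)$ or $\mu_{k,j}(t)=\mu_{-k-1,i}(t)$ (see (28)); there you must integrate around the union of overlapping discs and accept a slightly enlarged neighborhood (absorbable into $c_{1}$), a caveat you should state explicitly.
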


\begin{proof}
Let $\lambda$ be the eigenvalue of $L_{t}(\varepsilon,C)$\textit{ lying in
}$\left(  \mu_{N,1}(t)-\varepsilon_{N},\infty\right)  $ and $\mu_{k,j}(t)$ be
an eigenvalue of $L_{t}(C)$ closest to $\lambda.$ We prove that $\lambda\in
U_{\varepsilon_{k}}(\mu_{k,j}(t)).$ For this we use the formula
\begin{equation}
(\lambda-\mu_{k,j}(t))(\Psi_{k,j,s,t},\Phi)=\varepsilon\left(  ((P_{2}%
-C)\Psi_{k,j,s,t}^{(2\nu-2)},\Phi)+%
{\textstyle\sum\limits_{l=3}^{2\nu}}
(P_{\nu}\Psi_{k,j,s,t}^{2\nu-l},\Phi)\right)  \tag{25}%
\end{equation}
which can be obtained from $L_{t}(\varepsilon,C)\Psi=\lambda\Psi$ by
multiplying both sides by $\Phi_{k,j,s,t}(x)$ and using (24), where $\Psi$ is
a normalized eigenfunction of $L_{t}(\varepsilon,C)$ corresponding to the
eigenvalue $\lambda.$ It was proved in [6] that there exists $c_{2}$ such
that
\begin{equation}
\left\vert ((P_{2}-C)\Psi_{k,j,s,t}^{(2\nu-2)},\Phi)+%
{\textstyle\sum\limits_{l=3}^{2\nu}}
(P_{l}\Psi_{k,j,s,t}^{2\nu-l},\Phi)\right\vert \leq c_{2}\left(  \mid\frac
{\ln|k|}{k}\mid+q_{k}\right)  \left(  2\pi k\right)  ^{2\nu-2} \tag{26}%
\end{equation}
for $\left\vert k\right\vert \geq N$ (see (51) and (54) of [6]). Moreover, by
Lemma 4 of [6], for each eigenfunction $\Psi_{k,j,s,t}$ of $L_{t}(C)$ such
that $\left\vert k\right\vert \geq N$ there exists an eigenfunction $\Phi$ of
$L_{t}(\varepsilon,C)$ satisfying
\begin{equation}
\left\vert \left(  \Psi_{k,j,s,t},\Phi\right)  \right\vert >c_{3} \tag{27}%
\end{equation}
and conversely for each eigenfunction $\Phi$ corresponding to the eigenvalue
of $L_{t}(\varepsilon,C)$\textit{ }lying in\textit{ }$\left(  \mu
_{N,1}(t)-\varepsilon_{N},\infty\right)  $ there exists $\Psi_{k,j,s,t}$
satisfying (27). Therefore, using (26) and (27) in (25) we get the proof of
the theorem.
\end{proof}

Now, using Theorem 4 and repeating the proof of Theorem 2.3, Corollary 2.4,
and Theorem 2.5 of [9] we obtain the following theorem about the bands and gaps.

\begin{theorem}
$(a)$ There exists a positive integer $N_{1}$ such that if $s\geq N_{1}$ then
the interval $\left[  a(s),b(s)\right]  $ is contained in each of the bands
$I_{sm+1},I_{sm+2},...,I_{sm+m},$ where
\[
a(s)=(s\pi)^{2v}+\mu_{p}\left(  \pi s\right)  ^{2\nu-2}+\varepsilon(s),\text{
}b(s)=\left(  s\pi+\pi\right)  ^{2}+\mu_{1}\left(  s\pi+\pi\right)  ^{2\nu
-2}-\varepsilon(s),
\]
$I_{n}$ is defined in (4), $\varepsilon(s)=\varepsilon_{k}$ if $s\in\left\{
2k,2k+1\right\}  $ and $\varepsilon_{k}$ is defined in Theorem 4.

$(b)$ Let $(\alpha,\beta)$ be the spectral gap of $L$ such that $\alpha
>b(N_{1}).$ Then $(\alpha,\beta)$ is contained in the interval
$U(s):=(b(s),a(s+1))$ for some $s\geq N_{1}$. Moreover, the spectral gap
$(\alpha,\beta)\subset U(s)$ lies between the bands $I_{sm+m}(Q)$ and
$I_{sm+m+1}(Q)$ and its length does not exceed $2\max\left\{  \varepsilon
(s),\varepsilon(s+1)\right\}  .$
\end{theorem}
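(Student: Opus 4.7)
The strategy is to compare $L_t = L_t(1, C)$ with the unperturbed operator $L_t(C)$ from (22), using Theorem~4 to transfer spectral estimates and Theorem~2 to invoke continuity of the band functions. The argument follows the template of Theorems~2.3, 2.5 and Corollary~2.4 of [9] for $\nu = 1$, the only change being that the unperturbed eigenvalues $(2\pi k+t)^2 + \mu_j$ there are replaced by $\mu_{k,j}(t) = (2\pi k + t)^{2\nu} + \mu_j(2\pi k + t)^{2\nu-2}$ from (23).

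For part (a), I first describe the band structure of $L_t(C)$. At $t = 0$ its eigenvalues cluster at the values $(2k\pi)^{2\nu} + \mu_j(2k\pi)^{2\nu-2}$, $j = 1,\dots,p$, with total multiplicity $2m$ for each $k \geq 1$ (since $\pm k$ contribute equally), and at $t = \pi$ they cluster at $((2k+1)\pi)^{2\nu} + \mu_j((2k+1)\pi)^{2\nu-2}$ with the same total multiplicity (from $k$ and $-k-1$). A cumulative count shows that for $s$ large the $(sm+j)$-th eigenvalue ($j = 1, \ldots, m$) of $L_t(C)$ at the endpoint $t = 0$ (if $s = 2k$) or $t = \pi$ (if $s = 2k-1$) lies in the upper half of the $(s\pi)^{2\nu}$-cluster, hence is $\leq (s\pi)^{2\nu} + \mu_p(s\pi)^{2\nu-2}$; while at the opposite endpoint it lies in the lower half of the $((s+1)\pi)^{2\nu}$-cluster, hence is $\geq ((s+1)\pi)^{2\nu} + \mu_1((s+1)\pi)^{2\nu-2}$. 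Theorem~4 (applied with $\varepsilon = 1$) then shows that each perturbed $\lambda_{sm+j}(t)$ differs from its unperturbed counterpart by at most $\varepsilon(s)$, so $\lambda_{sm+j}$ attains a value $\leq a(s)$ at one endpoint and a value $\geq b(s)$ at the other. Since $\lambda_{sm+j}$ is continuous in $t$ by Theorem~2, the intermediate value theorem yields $[a(s), b(s)] \subseteq I_{sm+j}$ for each $j = 1,\dots,m$.

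For part (b), by (a) any spectral gap $(\alpha, \beta)$ with $\alpha > b(N_1)$ is forced into the complement of $\bigcup_{s \geq N_1}[a(s), b(s)]$, hence into some $U(s) = (b(s), a(s+1))$; its location between $I_{sm+m}$ and $I_{sm+m+1}$ then follows from the indexing, since $[a(s),b(s)]$ sits inside the first $m$ bands of the $s$-group and $[a(s+1),b(s+1)]$ inside the $(s+1)$-group. For the length bound, I compare $\sup I_{sm+m}$ and $\inf I_{sm+m+1}$ with their unperturbed analogues. Since each map $y \mapsto y^{2\nu} + \mu_j y^{2\nu-2}$ is continuous and tends to $+\infty$ as $|y| \to \infty$, the unperturbed spectrum $\sigma(L(C)) = \bigcup_{j=1}^{p}\{y^{2\nu} + \mu_j y^{2\nu-2} : y \in \mathbb{R}\}$ has the form $[c, \infty)$ for some $c$ and therefore no gaps at high energies, so the unperturbed top of $I_{sm+m}$ equals the unperturbed bottom of $I_{sm+m+1}$. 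Theorem~4 bounds the deviation of each of these extrema from their unperturbed values by $\varepsilon(s)$ and $\varepsilon(s+1)$ respectively, giving a gap length at most $\varepsilon(s) + \varepsilon(s+1) \leq 2\max\{\varepsilon(s), \varepsilon(s+1)\}$.

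The main obstacle is the combinatorial bookkeeping required to identify which element of the $2m$-element $(s\pi)^{2\nu}$- and $((s+1)\pi)^{2\nu}$-clusters (ranked by $\mu_j$ with cumulative multiplicities $2(m_1 + \cdots + m_q)$) corresponds to each band index $sm + j$, especially across the parity split of $s$, where bands reorganize between the upper and lower halves of the cluster. This is precisely the bookkeeping performed in detail in [9] for $\nu = 1$, and the extra factor $(2\pi k + t)^{2\nu - 2}$ appearing here merely rescales the relevant distances without altering the combinatorial structure of the clusters, which is why the authors can invoke [9] for the repetitive parts.
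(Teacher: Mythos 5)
Your plan is correct and follows essentially the same route as the paper, which gives no independent argument for Theorem 5 but simply invokes Theorem 4 and the proofs of Theorem 2.3, Corollary 2.4 and Theorem 2.5 of [9]; your account (cluster structure of $L_t(C)$ at $t=0,\pi$, Theorem 4 to transfer to $L_t$, continuity of $\lambda_n(t)$ plus the intermediate value theorem for part (a), and gaplessness of the unperturbed high-energy spectrum plus the two-sided $\varepsilon_k$-localization for part (b)) is a faithful description of what that repetition amounts to. The one step worth making explicit is that attaching the full multiplicity $2m$ of perturbed eigenvalues to each unperturbed cluster (rather than merely ``at least one per neighborhood,'' which is all Theorem 4 literally asserts) requires the continuity of the eigenvalues of $L_t(\varepsilon,C)$ in $\varepsilon\in[0,1]$ --- the reason that family is introduced --- and this is part of the bookkeeping from [9] that you correctly identify as the remaining work.
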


For a detailed study of $\sigma(L),$ by using the asymptotic formulas,\ we
need to consider the multiplicities of the eigenvalues of $L_{t}(C)$ and the
exceptional points of the spectrum of $L(C).$ The multiplicity of $\mu
_{k,j}(t)$ is $m_{j}$ if $\mu_{k,j}(t)\neq\mu_{n,i}(t)$ for all $(n,i)\neq
(k,j).$ The multiplicity of $\mu_{k,j}(t)$ is changed, that is, $\mu_{k,j}(t)$
is an exceptional point of $\sigma(L(C))$ if $\mu_{k,j}(t)=\mu_{n,i}(t)$ for
some $(n,i)\neq(k,j).$ To consider the exceptional points of $\sigma(L(C))$
and $\sigma(L)$ we use the notation\ $a_{k}\asymp b_{k}$ which means that
there exist constants $c_{4},$ $c_{5},$ $c_{6}$ such that $c_{4}%
|a_{k}|<\left\vert b_{k}\right\vert <c_{5}|a_{k}|$ for all $\left\vert
k\right\vert >c_{6}.$ It follows from (23) that if $t\in\lbrack-\frac{\pi}%
{2},\frac{3\pi}{2}),$ then $\mu_{k,j}(t)-\mu_{k,i}(t)\asymp k^{2\nu-2}$ for
$j\neq i$ and $\left\vert \mu_{k,j}(t)-\mu_{n,i}(t)\right\vert \geq d_{k}$ for
$n\neq k,-k,-(k+1),$ where $d_{k}\asymp k^{2\nu-1}.$ Thus, the large
eigenvalue $\mu_{k,j}(t)$ for $t\in\lbrack-\frac{\pi}{2},\frac{3\pi}{2})$ may
become an exceptional Bloch eigenvalue of $L(C)$ if at least one of the
following equalities holds
\begin{equation}
\mu_{k,j}(t)=\mu_{-k,i}(t),\text{ }\mu_{k,j}(t)=\mu_{-k-1,i}(t).\tag{28}%
\end{equation}
Therefore we need to consider the points $t\in\lbrack-\frac{\pi}{2},\frac
{3\pi}{2})$ for which the equalities in (28) do not hold. Moreover, to prove
that the eigenvalues of $L_{t}$ lying in $\varepsilon_{k}=o(k^{2\nu-2})$
neighborhood of $\mu_{k,j}(t)$ (see Theorem 4) do not coincide with the
eigenvalues lying in $\varepsilon_{-k}$ and $\varepsilon_{-k-1}$ neighborhood
of $\mu_{-k,i}(t)$ and $\mu_{-k-1,i}(t)$ we consider the points $t\in
\lbrack-\frac{\pi}{2},\frac{3\pi}{2})$ for which
\begin{equation}
\left\vert f(t)\right\vert >\varepsilon_{k}+\varepsilon_{-k}\text{,
}\left\vert g(t)\right\vert >\varepsilon_{k}+\varepsilon_{-k-1}\text{,}%
\tag{29}%
\end{equation}
where $f(t)=\mu_{k,j}(t)-\mu_{-k,i}(t),$ $g(t)=\mu_{k,j}(t)-\mu_{-k-1,i}(t).$
Using (23) and the binomial expansion of $(a+b)^{n}$ for $n=2\nu$ and
$n=2\nu-2$ we obtain
\[
f(t)=(2\pi k)^{2\nu-2}(8\nu k\pi t+\mu_{j}-\mu_{i})+O(k^{2\nu-3}),\text{
}f\left(  \frac{\mu_{i}-\mu_{j}}{8\nu k\pi}\right)  =O(k^{2\nu-3}).
\]
On the other hand, one can easily verify that $f^{^{\prime}}(t)\asymp
k^{2\nu-1}.$ Therefore, there exists $\delta_{k}=o(k^{-1})$ such that the
first inequality of (29) holds if $t$ does not belong to the interval
\[
\left(  \frac{\mu_{i}-\mu_{j}}{8\nu k\pi}-\delta_{k},\frac{\mu_{i}-\mu_{j}%
}{8\nu k\pi}+\delta_{k}\right)  .
\]
In the same way we prove that if $t$ does not belong to the interval
\[
\left(  \pi+\frac{\mu_{i}-\mu_{j}}{4\pi\nu(2k+2\nu-1)}-\delta_{k},\pi
+\frac{\mu_{i}-\mu_{j}}{4\pi\nu(2k+2\nu-1)}+\delta_{k}\right)  ,
\]
then the second inequality of (29) holds. Therefore, using (29) and Theorem 4
and repeating the proof of Corollary 2.8 \ and Theorem 2.10 of [9] we obtain.

\begin{theorem}
$(a)$ There exist $N_{2}>N_{1}$ and $\gamma_{k}=o(k^{2\nu-2})$ such that the
spectral gap $(\alpha,\beta)$ defined in Theorem 5 and lying in $U(k)$ for
$k>N_{2}$ is contained in the intersection of the sets
$S(1,k),S(2,k),...,S(p,k),$ where
\[
S(j,k)=%
{\textstyle\bigcup\limits_{i=1,2,...,p}}
\left(  \left(  \pi k\right)  ^{2\nu}+\frac{\mu_{i}+\mu_{j}}{2}\left(  \pi
k\right)  ^{2\nu-2}-\gamma_{k},\left(  \pi k\right)  ^{2s}+\frac{\mu_{i}%
+\mu_{j}}{2}\left(  \pi k\right)  ^{2\nu-2}+\gamma_{k}\right)  .
\]

$(b)$ If there exists a triple $(j_{1},j_{2},j_{3})$ such that
\begin{equation}
\min_{i_{1},i_{2},i_{3}}\left(  diam(\{\mu_{j_{1}}+\mu_{i_{1}},\mu_{j_{2}}%
+\mu_{i_{2}},\mu_{j_{3}}+\mu_{i_{3}}\})\right)  \neq0,\tag{30}%
\end{equation}
where minimum is taken under condition $i_{s}\in\left\{  1,2,...,p\right\}  $
for $s=1,2,3$ and
\[
diam(E)=\sup_{x,y\in E}\mid x-y\mid,
\]
then there exists a number $H$ such that $(H,\infty)\subset\sigma(L)$ and the
number of the gaps in $\sigma(L)$ is finite.
\end{theorem}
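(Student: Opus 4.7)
The plan is to mirror the arguments of Corollary 2.8 and Theorem 2.10 of [9] (the $\nu=1$ case), feeding in the $\nu>1$ localization from Theorem 4 and the dispersion analysis leading to (29).

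For part (a), I would argue as follows. A gap $(\alpha,\beta)\subset U(k)$ forces, for each $t$, that no eigenvalue of $L_t$ lies in $(\alpha,\beta)$. By the second half of Theorem 4, for every $j\in\{1,\dots,p\}$ and every large quasimomentum index $k'$, $L_t$ has an eigenvalue within $\varepsilon_{k'}$ of $\mu_{k',j}(t)$; by Theorem 2(a) this eigenvalue is a continuous function of $t$ and traces a continuous band. For $t$ in the non-exceptional regime (both inequalities of (29) hold), this eigenvalue is pinned within $\varepsilon_{k'}=o(k'^{\,2\nu-2})$ of $\mu_{k',j}(t)$, so the only way the band can skip over $(\alpha,\beta)$ without depositing an eigenvalue inside it is for $\mu_{k',j}(t)$ never to enter $[\alpha-\varepsilon_{k'},\beta+\varepsilon_{k'}]$ while $t$ runs through the non-exceptional set. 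The band must therefore traverse $(\alpha,\beta)$ during one of the two short exceptional intervals of length $2\delta_{k'}=o(k'^{-1})$. The calculation preceding (29) shows that on such an interval
\[
\mu_{k',j}(t)=(\pi k)^{2\nu}+\tfrac{\mu_i+\mu_j}{2}(\pi k)^{2\nu-2}+O(k^{2\nu-3})
\]
for some $i$, with $k\in\{2k',2k'+1\}$ depending on whether the exceptional point is near $t=0$ or $t=\pi$, and the total variation of $\mu_{k',j}(t)$ across the interval is $|\mu_{k',j}'|\cdot 2\delta_{k'}=o(k^{2\nu-2})$. The perturbation-theoretic splitting of the coalescing pair $\mu_{k',j}(t),\mu_{-k',i}(t)$ (or $\mu_{-k'-1,i}(t)$) is governed by off-diagonal Fourier entries of $P_2-C$ and the other $P_l$, of magnitude $O(q_{k'}k'^{\,2\nu-2})=o(k^{2\nu-2})$. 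Summing these two $o(k^{2\nu-2})$ contributions defines $\gamma_k$ and forces the band, and hence $(\alpha,\beta)$, to lie within $\gamma_k$ of the above exceptional value for some $i$, i.e., inside $S(j,k)$. Since $j$ is arbitrary, $(\alpha,\beta)\subset\bigcap_{j=1}^{p}S(j,k)$.

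For part (b), I would use (30) to show $\bigcap_{s=1}^{3}S(j_s,k)=\varnothing$ for all sufficiently large $k$. If $\lambda\in S(j_1,k)\cap S(j_2,k)\cap S(j_3,k)$, there exist $i_1,i_2,i_3\in\{1,\dots,p\}$ with
\[
\Bigl|\lambda-(\pi k)^{2\nu}-\tfrac{\mu_{i_s}+\mu_{j_s}}{2}(\pi k)^{2\nu-2}\Bigr|<\gamma_k,\qquad s=1,2,3,
\]
and pairwise differencing yields
\[
|(\mu_{i_s}+\mu_{j_s})-(\mu_{i_{s'}}+\mu_{j_{s'}})|<\frac{4\gamma_k}{(\pi k)^{2\nu-2}}=o(1).
\]
But (30) gives a strict positive lower bound $d>0$ on $\max_{s,s'}|(\mu_{i_s}+\mu_{j_s})-(\mu_{i_{s'}}+\mu_{j_{s'}})|$ uniformly in $(i_1,i_2,i_3)$, so this is impossible for $k$ larger than some $N_3$. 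Combined with part (a), no gaps occur in $U(k)$ for $k>N_3$; by Theorem 5(b) every gap of $\sigma(L)$ above $b(N_1)$ lies in some $U(k)$, hence every such gap sits below $a(N_3+1)$. Taking $H=a(N_3+1)$ yields $(H,\infty)\subset\sigma(L)$, and combined with the finitely many possible gaps below $H$ (only boundedly many bands lie below any fixed level) this gives the finiteness assertion.

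The main obstacle is the quantitative control embedded in $\gamma_k$. Rigorous execution of part (a) requires showing that when $\mu_{k',j}(t)$ coalesces with $\mu_{-k',i}(t)$ or $\mu_{-k'-1,i}(t)$, the associated eigenvalue of $L_t$ stays within $o(k^{2\nu-2})$ of their common value, which is precisely the block-perturbation step carried out in Theorems 2.3 and 2.5 of [9] for $\nu=1$. Adapting it to general $\nu>1$, keeping track of the correct powers of $k$ and exploiting $q_{k'}\to 0$ (Riemann–Lebesgue) to dominate the coupling, is where the bulk of the omitted technical work lies and is what the authors flag as "similar to [9]".
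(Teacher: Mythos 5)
Your proposal follows essentially the same route as the paper, which gives no self-contained proof of Theorem 6 but derives it from Theorem 4, the exceptional-point analysis leading to (29), and the arguments of Corollary 2.8 and Theorem 2.10 of [9]: your localization of any gap in $U(k)$ near the values $(\pi k)^{2\nu}+\tfrac{\mu_i+\mu_j}{2}(\pi k)^{2\nu-2}$ for each $j$, and the emptiness of $S(j_1,k)\cap S(j_2,k)\cap S(j_3,k)$ under (30), are exactly the intended steps. The technical point you flag --- tracking the band through the $o(k^{-1})$ exceptional intervals to obtain $\gamma_k=o(k^{2\nu-2})$ --- is precisely the portion the paper delegates to [9].
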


\end{document}